\newtheorem{theorem}{Theorem}[section]
\newtheorem{definition}[theorem]{Definition}
\newtheorem{lemma}[theorem]{Lemma}
\newtheorem{prop}[theorem]{Proposition}
\numberwithin{equation}{section}
\def\O{\Omega}
\begin{document}

\baselineskip 20pt

\begin{center}

\textbf{\Large Singleton sets random attractor for stochastic
FitzHugh-Nagumo lattice equations driven by fractional Brownian
motions} \footnote{This work has been partially supported by NSFC
Grants 11071199, NSF of Guangxi Grants 2013GXNSFBA019008 and Guangxi
Provincial Department of Research Project Grants 2013YB102.}

\vskip 0.5cm

{\large  Anhui Gu, Yangrong Li}

\vskip 0.3cm

\textit{School of Mathematics and Statistics, Southwest
University, Chongqing 400715, China}\\

\vskip 1cm

\begin{minipage}[c]{15cm}

\noindent \textbf{Abstract}: The paper is devoted to the study of
the dynamical behavior of the solutions of stochastic
FitzHugh-Nagumo lattice equations, driven by fractional Brownian
motions, with Hurst parameter greater than $1/2$. Under some usual
dissipativity conditions, the system considered here features
different dynamics from the same one perturbed by Brownian motion.
In our case, the random dynamical system has a unique random
equilibrium, which constitutes a singleton sets random attractor.
\vspace{5pt}

\vspace{5pt}

\textit{Keywords}:  Stochastic FitzHugh-Nagumo lattice equations;
fractional Brownian motion; random dynamical systems; random
attractor.

\end{minipage}
\end{center}

\vspace{10pt}

\baselineskip 18pt

\section{Introduction}
Recently, the dynamics of deterministic lattice dynamical systems
have drawn much attention of mathematicians and physicists, see e.g.
\cite{BLW}-\cite{ZS} and the references therein. As we know, most of
the realistic systems involve noises which may play an important
role as intrinsic phenomena rather than just compensation of defects
in deterministic models. Stochastic lattice dynamical systems (SLDS)
arise naturally while these random influences or uncertainties are
taken into account. Since Bates et al. \cite{BLL} initiated the
study of SLDS, many works have been done regarding the existence of
global random attractors for SLDS with white noises on infinite
lattices (see e.g. \cite{CL}-\cite{WLX}). Later, the existence of
global random attractors was extended to other SLDS with additive
white noises, for example, first-order SLDS on $\mathbb{Z}^k$
\cite{LS1}, stochastic Ginzburg-Landau lattice equations \cite{LS2},
stochastic FitzHugh-Nagumo lattice equations \cite{Huang},
second-order stochastic lattice systems \cite{WLX} and first (or
second)-order SLDS with a multiplicative white noise \cite{CL,
Han1}. Zhao and Zhou \cite {ZZ1} gave some sufficient conditions for
the existence of a global random attractor for general SLDS in the
non-weighted space $\mathbb{R}$ of infinite sequences and provided
an application to damped sine-Gordon lattice systems with additive
noises. Very recently, Han et al. \cite{HSZ} provided some
sufficient conditions for the existence of global compact random
attractors for general SLDS in the weighted space $\ell_\rho ^p$ $
(p\geqslant 1)$ of infinite sequences, and their results are applied
to second-order SLDS in \cite{Han2} and \cite{Han3}.

However, as can be seen that all the work above are considered in
the framework of the classical It$\ddot{\rm o}$ theory of Brownian
motion. Note that fractional Brownian motion (fBm) does not possess
independent increments and stochastic differential equations driven
by fBm do not define the Markov process as in the case of usual
white noises. Therefore, it is not possible to apply standard
methods (see e.g. Theorem 3.1 in \cite{HSZ}) to deal with these
questions. Fortunately, the theory of random dynamical systems still
works for non-Markovian processes (see \cite{MS, GLS}). In
\cite{Gu2}, we consider the first-order lattice dynamical system
perturbed by fractional Brownian motions.

FBm appears naturally in the modeling of many complex phenomena in
applications when the systems are subject to ``rough" external
forcing. An fBm is a stochastic process which differs significantly
from the standard Brownian motion and semi-martingales, and other
classically used processes in the theory of stochastic process. As a
centered Gaussian process, it is characterized by the stationarity
of its increments and a medium- or long-memory property. It also
exhibits power scaling with exponent $H$. Its paths are H$\ddot{\rm
o}$lder continuous of any order $H'\in (0, H)$. An fBm is not a
semi-martingale nor a Markov process. Especially, when the Hurst
parameter $H\in (1/2, 1)$, the fBm has the properties of
self-similarity and long-range dependence. So, fBm is the good
candidate to model random long term influences in climate systems,
hydrology, medicine and physical phenomena. For more details on fBm,
we can refer to the monographs \cite{BHOZ, Mishura}.

Motivated by \cite{Huang, Gu2}, we investigate the long-term
behavior of the following stochastic FitzHugh-Nagumo lattice
equations:
\begin{equation}\label{1}
\left\{
\begin{array}{l}
\frac{du_i}{dt}=u_{i-1}-2u_i+u_{i+1}-\lambda u_i+f_i(u_i)-v_i
+a_i\frac{d\beta_i^H(t)}{dt},\\
\frac{dv_i}{dt}=\varrho u_i-\sigma v_i+b_i\frac{d\beta_i^H(t)}{dt},\\
 u(0)=u_{0}=(u_{i0})_{i\in \mathbb{Z}},\quad
 v(0)=v_{0}=(v_{i0})_{i\in \mathbb{Z}},
\end{array}
\right.
\end{equation}
where $\mathbb{Z}$ denotes the integer set, $u_i\in \mathbb{R} $,
$\lambda, \varrho$ and $\sigma$ are positive constants, $f_i$ are
smooth functions satisfying some dissipative conditions,
$(a_i)_{i\in \mathbb{Z}}\in \ell^2$, $(b_i)_{i\in \mathbb{Z}}\in
\ell^2$ and $\{\beta_i^H: i\in \mathbb{Z}\}$ are independent
two-sided fractional Brownian motions with Hurst parameter $H\in
(1/2, 1)$, $\ell^2=(\ell^2, ( \cdot , \cdot ), \|\cdot\|)$ denotes
the regular space of infinite sequences. When there are no noises
terms, form similar to \eqref{1} is the discrete of the
FitzHugh-Nagumo system which arose as modeling the signal
transmission across axons in neurobiology (see \cite{Jones}).
FitzHugh-Nagumo lattice system was used to stimulate the propagation
of action potentials in myelinated nerve axons (see \cite{EV}). The
stochastic FitzHugh-Nagumo lattice equations were first proposed in
\cite{Huang}. The existence of random attractors of (similar)
stochastic FitzHugh-Nagumo lattice equations with white noises were
established in \cite{Huang, WLW} and \cite{Gu1}.

The goal of this article is to establish the existence of a random
attractor for stochastic FitzHugh-Nagumo lattice equations with the
nonlinear $f$ under some dissipative conditions and driven by
fractional Brownian motions with Hurst parameter $H\in (1/2, 1)$. By
borrowing the main ideas of \cite{GKN}, we first define a random
dynamical system by using a pathwise interpretation of the
stochastic integral with respect to the fractional Brownian motions.
This method is based on the fact that a stochastic integral with
respect to an fBm with Hurst parameter $H\in (1/2, 1)$ can be
defined by a generalized pathwise Riemann-Stieltjes integral (see
e.g. \cite{Zahle}--\cite{TTV}). And then we show the existence of a
pullback absorbing set for the random dynamical system achieved by
means of a fractional Ornstein-Uhlenbeck transformation and Gronwall
lemma. Since every trajectory of the solutions of system \eqref{1}
cannot be differentiated, we have to consider the difference between
any two solutions among them, which is pathwise differentiable (see
\cite{GKN}). Due to the stationarity of the fractional
Ornstein-Uhlenbeck solution, we get a unique random equilibrium
finally. All solutions converge pathwise to each other, so the
random attractor, which consists of a unique random equilibrium, is
proven to be a singleton sets random attractor.

The paper is organized as follows. In Sec. 2, we recall some basic
concepts on random dynamical systems. In Sec. 3, we give a unique
solution to system \eqref{1} and make sure that the solution
generates a random dynamical system. We establish the main result,
that is, the random dynamical system generated by equation \eqref{1}
has a unique random equilibrium, which constitutes a singleton sets
random attractor in Sec. 4.

\section{Preliminaries}
In this section, we introduce some basic concepts related to random
dynamical systems and random attractors, which are taken from
\cite{Arnold}-\cite{CDF}.

Let $(\mathbb{E}, \|\cdot\|_{\mathbb{E}})$ be a separable Hilbert
space and $(\Omega, \mathcal{F}, \mathbb{P})$ be a probability
space.

\begin{definition} \label{MDS}
A metric dynamical system $(\Omega, \mathcal{F}, \mathbb{P},
\theta)$ with two-sided continuous time $\mathbb{R}$ consists of a
measurable flow
\begin{equation*}
\theta: (\mathbb{R}\times \Omega, \mathcal{B}(\mathbb{R})\otimes
\mathcal{F}) \rightarrow (\Omega, \mathcal{F}),
\end{equation*}
where the flow property for the mapping $\theta$ holds for the
partial mappings $\theta_t=\theta(t, \cdot)$:
\begin{equation*}
\theta_t\circ \theta_s=\theta_t\theta_s=\theta_{t+s}, \ \
\theta_0={\rm i d}_{\Omega}
\end{equation*}
for all $s, t\in \mathbb{R}$, and $\theta \mathbb{P}=\mathbb{P}$ for
all $t\in \mathbb{R}$.
\end{definition}

\begin{definition}
\label{RDS} A continuous random dynamical system (RDS) $\varphi $ on
$\mathbb{E}$ over $(\Omega ,\mathcal{F},\mathbb{P},(\theta _t)_{t\in
\mathbb{R}})$ is a $(\mathcal{B}(\mathbb{R}^{+})\times
\mathcal{F}%
\times \mathcal{B}(\mathbb{E}),\mathcal{B}(\mathbb{E}))$-measurable
mapping and satisfies

(i)  $\varphi (0,\omega )$ is the identity on $\mathbb{E}$;

(ii)  $\varphi (t+s,\omega )=\varphi (t,\theta _s\omega )\circ
\varphi (s,\omega )$ for all $s,$ $t\in \mathbb{R}^{+}$, $\omega \in
\Omega $;

(iii)  $\varphi (t,\omega )$ is continuous on $\mathbb{E}$ for all
$(t,\omega )\in \mathbb{R}^{+}\times \Omega $.
\end{definition}

A universe $\mathcal{D}=\{D(\omega), \omega \in \Omega\}$ is a
collection of nonempty subsets $D(\omega)$ of $\mathbb{E}$
satisfying the following inclusion property: if $D\in \mathcal{D}$
and $D'(\omega)\subset D(\omega)$ for all $\omega \in \Omega$, then
$D'\in \mathcal{D}$.

\begin{definition}\label{attractors}
A family $\mathcal{A}=\{A(\omega), \omega \in \Omega\}$ of nonempty
measurable compact subsets $\mathcal{A}(\omega)$ of $\mathbb{E}$ is
called $\varphi$- invariant if $\varphi(t, \omega,
\mathcal{A}(\omega))=\mathcal{A}(\theta_t\omega)$ for all $t\in
\mathbb{R^+}$ and is called a random attractor if in addition it is
pathwise pullback attracting in the sense that
$$H_d^*(\varphi(t, \theta_{-t}\omega, D(\theta_{-t}\omega)),
\mathcal{A}(\omega)) \rightarrow 0 \ \ \mbox{as} \ \
t\rightarrow\infty$$ for all $D\in \mathcal{D}$. Here $H_d^*$ is the
Hausdorff semi-distance on $\mathbb{E}$.
\end{definition}

\begin{definition}\label{equilibrium}
A random variable $u: \Omega\mapsto \mathbb{E}$ is said to be a
random equilibrium of the RDS $\varphi $ if it is invariant under
$\varphi $, i.e. if
\begin{equation*}
\varphi(t, \omega)u(\omega)=u(\theta_t\omega) \quad \mbox{for all}
\quad t\ge 0 \quad \mbox{and all} \quad \omega\in \Omega.
\end{equation*}
\end{definition}

\begin{definition}
A random variable $r: \Omega\rightarrow \mathbb{R}$ is called
tempered if
\begin{equation*}
\lim_{t\rightarrow\pm \infty}\frac{\log|r(\theta_t\omega)|}{|t|}=0 \
\ \mathbb{P}-a.s.
\end{equation*}
and a random set $\{D(\omega), \omega \in \Omega\}$ with
$D(\omega)\subset \mathbb{E}$ is called tempered if it is contained
in the ball $\{x\in \mathbb{R}: |x|\leq r(\omega)\}$, where $r$ is a
tempered random variable.
\end{definition}

Here we will always work with the attracting universe given by the
tempered random sets.

\begin{definition}
A family $\hat{B}=\{B\mathcal{(\omega)}, \omega \in \Omega\}$ is
said to be pullback absorbing if for every $D(\omega)\in
\mathcal{D}$, there exists $T_{D}(\omega)\geq 0$ such that
\begin{equation}\label{2}
\varphi(t, \theta_{-t}\omega, D(\theta_{-t}\omega))\subset B(\omega)
\ \ \forall t\geq T_{D}(\omega).
\end{equation}
\end{definition}

The following result (cf. Proposition 9.3.2 in \cite{Arnold},
Theorem 2.2 in \cite{CDF}) guarantees the existence of a random
attractor.

\begin{theorem}
\label{theorem 1} Let $(\theta, \varphi)$ be a continuous RDS on
$\Omega\times \mathbb{E}$. If there exists a pullback absorbing
family $\hat{B}=\{B\mathcal{(\omega)}, \omega \in \Omega\}$ such
that, for every $\omega \in \Omega$, $B(\omega)$ is compact and
$B(\omega)\in \mathcal{D}$, then the RDS $(\theta, \varphi)$ has a
random attractor

\[
\mathcal{A}(\omega )=\bigcap_{\tau>0}\overline{%
\bigcup_{t\geqslant \tau }\varphi (t,\theta _{-t}\omega )B(\theta
_{-t}\omega )}.
\]
\end{theorem}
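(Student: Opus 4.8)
The plan is to show that the pullback $\Omega$-limit set
\[
\mathcal A(\omega)=\bigcap_{\tau>0}R_\tau(\omega),\qquad
R_\tau(\omega):=\overline{\bigcup_{t\geq\tau}\varphi(t,\theta_{-t}\omega)B(\theta_{-t}\omega)},
\]
is a family of nonempty measurable compact sets that is $\varphi$-invariant and pathwise pullback attracting in the sense of Definition~\ref{attractors}; by construction this is exactly the set in the statement. Note first that $R_{\tau_1}(\omega)\supseteq R_{\tau_2}(\omega)$ for $\tau_1\leq\tau_2$.

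\textbf{Step 1 (nonemptiness, compactness, measurability).} Since $B\in\mathcal D$, the absorbing property applied to $D=B$ gives $T_B(\omega)\geq0$ with $\varphi(t,\theta_{-t}\omega)B(\theta_{-t}\omega)\subset B(\omega)$ for all $t\geq T_B(\omega)$; hence $R_\tau(\omega)\subset B(\omega)$ for $\tau\geq T_B(\omega)$, so each such $R_\tau(\omega)$ is a nonempty closed subset of the compact set $B(\omega)$ and therefore compact. Thus $\{R_\tau(\omega):\tau\geq T_B(\omega)\}$ is a nested family of nonempty compacts and $\mathcal A(\omega)=\bigcap_{\tau}R_\tau(\omega)$ is nonempty and compact. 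Measurability of $\omega\mapsto\mathcal A(\omega)$ follows from the measurability of $(t,\omega,x)\mapsto\varphi(t,\omega)x$ together with the fact that the intersection may be taken over rational $\tau$, exactly as in \cite{Arnold,CDF}.

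\textbf{Step 2 ($\varphi$-invariance).} I use the standard description: $a\in\mathcal A(\omega)$ iff there exist $t_n\to\infty$ and $b_n\in B(\theta_{-t_n}\omega)$ with $\varphi(t_n,\theta_{-t_n}\omega)b_n\to a$. For $\varphi(t,\omega)\mathcal A(\omega)\subseteq\mathcal A(\theta_t\omega)$: given such $a,t_n,b_n$, the cocycle property gives $\varphi(t,\omega)\varphi(t_n,\theta_{-t_n}\omega)b_n=\varphi(t+t_n,\theta_{-(t+t_n)}\theta_t\omega)b_n$ with $b_n\in B(\theta_{-(t+t_n)}\theta_t\omega)$, and continuity of $\varphi(t,\omega)$ shows $\varphi(t,\omega)a\in\mathcal A(\theta_t\omega)$. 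For the reverse inclusion: given $a'\in\mathcal A(\theta_t\omega)$ with witnesses $s_n\to\infty$, $b_n\in B(\theta_{-s_n}\theta_t\omega)$, split $s_n=t+(s_n-t)$ via the cocycle to write $\varphi(s_n,\theta_{-s_n}\theta_t\omega)b_n=\varphi(t,\omega)c_n$ where $c_n=\varphi(s_n-t,\theta_{-(s_n-t)}\omega)b_n$ and $b_n\in B(\theta_{-(s_n-t)}\omega)$; by Step~1 one has $c_n\in B(\omega)$ for $n$ large, so along a subsequence $c_n\to c\in\mathcal A(\omega)$, and continuity yields $a'=\varphi(t,\omega)c\in\varphi(t,\omega)\mathcal A(\omega)$.

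\textbf{Step 3 (pullback attraction) and the main obstacle.} Fix $D\in\mathcal D$ and $\omega$. For any $\tau>0$, splitting $t=\tau+(t-\tau)$ by the cocycle property and then invoking the absorbing property \emph{at the shifted base point $\theta_{-\tau}\omega$} gives, for all $t\geq\tau+T_D(\theta_{-\tau}\omega)$,
\[
\varphi(t,\theta_{-t}\omega)D(\theta_{-t}\omega)=\varphi(\tau,\theta_{-\tau}\omega)\,\varphi\!\left(t-\tau,\theta_{-(t-\tau)}(\theta_{-\tau}\omega)\right)D\!\left(\theta_{-(t-\tau)}(\theta_{-\tau}\omega)\right)\subset\varphi(\tau,\theta_{-\tau}\omega)B(\theta_{-\tau}\omega)\subset R_\tau(\omega).
\]
Since the nested compacts $R_\tau(\omega)$ decrease to $\mathcal A(\omega)$, a routine compactness argument shows $H_d^*(R_\tau(\omega),\mathcal A(\omega))\to0$ as $\tau\to\infty$ (otherwise extract $x_n\in R_{\tau_n}(\omega)\subset B(\omega)$ with $\mathrm{dist}(x_n,\mathcal A(\omega))\geq\delta$; a subsequential limit lies in every $R_\tau(\omega)$, hence in $\mathcal A(\omega)$, a contradiction). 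Combining, for every $\varepsilon>0$ choose $\tau$ with $H_d^*(R_\tau(\omega),\mathcal A(\omega))<\varepsilon$; then $H_d^*(\varphi(t,\theta_{-t}\omega)D(\theta_{-t}\omega),\mathcal A(\omega))<\varepsilon$ for all $t\geq\tau+T_D(\theta_{-\tau}\omega)$, which is the claimed convergence. The genuine difficulty here is not analytic but organizational: keeping the cocycle/shift identities consistent so that the base points of $B$ and $D$ and the absorption times $T_B(\cdot),T_D(\cdot)$ line up — in particular realizing one must apply absorption at $\theta_{-\tau}\omega$, not at $\omega$, and that $B\in\mathcal D$ is precisely what forces the tails $R_\tau(\omega)$ to be compact; the measurability of $\omega\mapsto\mathcal A(\omega)$ is the other delicate point, and it is quoted from \cite{Arnold,CDF}.
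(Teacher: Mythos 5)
Your proof is correct; note, however, that the paper does not prove this theorem at all --- it quotes it (cf.\ Proposition 9.3.2 in \cite{Arnold} and Theorem 2.2 in \cite{CDF}), and your argument is essentially the standard one from those references: compactness of the tails $R_\tau(\omega)$ via absorption of $B$ itself, invariance via the sequential characterization of the pullback $\Omega$-limit set together with the cocycle identity, and attraction by absorbing $D$ at the shifted fiber $\theta_{-\tau}\omega$ and letting the nested compacts shrink. The only point you defer (measurability of $\omega\mapsto\mathcal A(\omega)$) is likewise deferred to the literature by the paper, so nothing is missing relative to the source.
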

Note that if the random attractor consists of singleton sets, i.e.
$\mathcal{A}(\omega)=\{u^*(\omega)\}$ for some random variable
$u^*$, then $u^*(t)(\omega)=u^*(t)(\theta_t\omega)$ is a stationary
stochastic process.

\section{FitzHugh-Nagumo Lattice Equations with
Fractional Brownian Motions}

We now recall the definition of a fractional Brownian motion. Given
$H\in (0, 1)$, a continuous centered Gaussian process $\beta^H(t),
t\in \mathbb{R}$, with the covariance function
\begin{equation*}
\mathbf{E}
\beta^H(t)\beta^H(s)=\frac{1}{2}(|t|^{2H}+|s|^{2H}-|t-s|^{2H}), \ \
t, s \in \mathbb{R}
\end{equation*}
is called a two-sided one-dimensional fBm, and $H$ is the Hurst
parameter. For $H=1/2$, $\beta$ is a standard Brownian motion, while
for $H\neq 1/2$, it is neither a semimartingale nor a Markov
process. Moreover,
\begin{equation*}
\mathbf{E}|\beta^H(t)-\beta^H(s)|^2=|t-s|^{2H}, \ \ \mbox{for all}\
\ s, t\in \mathbb{R}.
\end{equation*}

Here, we assume that $H\in (1/2, 1)$ throughout the paper. When
$H\in (0, 1/2)$ we cannot define the stochastic integral by a
generalized Stieljes integral and, therefore, dealing with such
values of the Hurst parameter seems to be much more complicated. It
is worth mentioning that when $H=1/2$ the fBm becomes the standard
Wiener process, the random dynamical system generated by the
(similar) stochastic FitzHugh-Nagumo lattice equations has been
studied in \cite{Huang, WLW}.

Using the definition of $\beta^H(t)$, Kolmogorov's theorem ensures
that $\beta^H$ has a continuous version, and almost all the paths
are H$\ddot{\rm o}$lder continuous of any order $H'\in (0, H)$ (see
\cite{Kunita}). Thus, let $\mathbb{E}=\ell^2\times\ell^2$ and norm
$\|\cdot\|_{\mathbb{E}}$, we can consider the canonical
interpretation of an fBm: denote $\Omega=C_0(\mathbb{R}, \ell^2)$,
the space of continuous functions on $\mathbb{R}$ with values in
$\ell^2$ such that $\omega(0)=0$, equipped with the compact open
topology. Let $\mathcal{F}$ be the associated Borel-$\sigma$-algebra
and $\mathbb{P}$ the distribution of the fBm $\beta^H$, and
$\{\theta_t\}_{t\in \mathbb{R}}$ be the flow of Wiener shifts such
that
\begin{equation*}
\theta_t\omega(\cdot)=\omega(\cdot+t)-\omega(t), \ \ t\in
\mathbb{R}.
\end{equation*}
Due to \cite{MS}-\cite{GS}, we know that the quadruple $(\Omega,
\mathcal{F}, \mathbb{P}, \theta)$ is an ergodic metric dynamical
system. Furthermore, it holds that
\begin{eqnarray}\label{3}
\begin{split}
&\beta^H(\cdot, \omega)=\omega(\cdot), \\
 \beta^H(\cdot, \theta_s\omega)&=\beta^H(\cdot+s, \omega)
-\beta^H(s, \omega)\\
&=\omega(\cdot+s)-\omega(s).
\end{split}
\end{eqnarray}
For $u=(u_i)_{i\in \mathbb{Z}}\in \ell^2$, define $\mathbb{A},
\mathbb{B}, \mathbb{B}^*$ to be linear operators from $\ell^2$ to
$\ell^2$ as follows:
\begin{eqnarray*}
\begin{split}
(\mathbb{A}u)_i&=-u_{i-1}+2u_i-u_{i+1}, \\
(\mathbb{B}u)_i&=u_{i+1}-u_i, \ \ (\mathbb{B}^*u)_i=u_{i-1}-u_i,\ \
i\in \mathbb{Z}. \end{split}
\end{eqnarray*}
It is easy to show that
$\mathbb{A}=\mathbb{B}\mathbb{B}^*=\mathbb{B}^*\mathbb{B}$, $(
\mathbb{B}^* u, u')=(u, \mathbb{B} u')$ for all $u, u'\in \ell^2$,
which implies that $(\mathbb{A}u, u)\geq 0$.

Let $W_1(t)\equiv W_1(t, \omega)=\sum_{i\in
\mathbb{Z}}a_i\omega_i(t)e^i$ and $W_2(t)\equiv W_2(t,
\omega)=\sum_{i\in \mathbb{Z}}b_i\omega_i(t)e^i$, here $(e^i)_{i\in
\mathbb{Z}}\in \ell^2$ denote the element having $1$ at position $i$
and the other components $0$. Then SLDS \eqref{1} with initial
conditions can be rewritten as pathwise Riemann-Stieltjes integral
equations in $\mathbb{E}$

\begin{eqnarray}\label{4}
\left\{
\begin{array}{l}
u(t)=u(0)+\int_0^t(-\mathbb{A}u(s)-\lambda u(s)+f(u(s))-v(s))ds
+W_1(t),\\
v(t)=v(0)+\int_0^t(\varrho u(s)-\sigma v(s))ds+W_2(t),\\
 u(0)=u_{0}=(u_{i0})_{i\in \mathbb{Z}},\quad
 v(0)=v_{0}=(v_{i0})_{i\in \mathbb{Z}},
\end{array}
\right.
\end{eqnarray}
where $u=(u_i)_{i\in \mathbb{Z}}$, $\lambda, \varrho$ and $\sigma$
are positive constants, $a=(a_i)_{i\in \mathbb{Z}}\in \ell^2$,
$b=(b_i)_{i\in \mathbb{Z}}\in \ell^2$ and $\{\omega_i=\beta^H_i:
i\in \mathbb{Z}\}$ are independent two-sided fractional Brownian
motions with Hurst parameter $H\in (1/2, 1)$, $f(u)=(f_i(u_i))_{i\in
\mathbb{Z}}$ is a nonlinear smooth function satisfies a one-sided
dissipative Lipschitz condition

\begin{equation}\label{5}
(f(u)-f(v), u-v)\le -\gamma \|u-v\|^2\ \ \text{for all} \ u, v\in
\mathbb{R}
\end{equation}
and the polynomial growth condition
\begin{equation}\label{6}
|f(u)|\le c_f(|u|^{2p+1}+1) \ \text{for all} \ \ u\in \mathbb{R},
\end{equation}
where $\gamma$ is a positive constant, $p$ is a positive integer.

In addition we could consider a more general dissipativity
condition, which would lead to nontrivial setvalued random
attractors, we will restrict here to the dissipativity condition
\eqref{5}. When system \eqref{1} with Hurst parameter $H=1/2$ and
under conditions \eqref{5} and \eqref{6}, we can apply the result of
Theorem 3.1 in \cite{HSZ}, i.e. the combination of the existence of
a bounded closed random absorbing set and the property of random
asymptotic nullity to get the existence of a compact random
attractor. Moreover, we have the following results:

\begin{lemma} \label{estimate}
There exists positive random constants
$(\tilde{\rho}_i(\omega))_{i\in \mathbb{Z}}\in \ell^2$ and
$\rho(\omega)=\|\tilde{\rho}(\omega)\|$ such that for every
$\omega\in \bar{\Omega}$, where $\bar{\Omega}\in \mathcal{F}$ is a
$(\theta_t)_{t\in \mathbb{R}}$-invariant set of full measure, the
fractional Brownian motions are well defined for $t\in \mathbb{R}$
in $\ell^2$ satisfying

\begin{eqnarray*}
\|W_j(t)\|^2\le 2\max\{\|a\|^2, \|b\|^2\}\rho^2(\omega)(1+|t|^4), \
\ j=1,2.
\end{eqnarray*}
\end{lemma}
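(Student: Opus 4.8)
The plan is to construct the random constants $(\tilde\rho_i(\omega))_{i\in\mathbb{Z}}$ componentwise from the growth behaviour of the scalar fBm paths, and then assemble them into an element of $\ell^2$. First I would recall the standard fact (an easy consequence of the law of the iterated logarithm, or more simply of Kolmogorov's continuity theorem together with the stationarity of increments; see \cite{Kunita, MS}) that for each $i$ there is a $\theta$-invariant set of full measure on which
\begin{equation*}
\sup_{t\in\mathbb{R}}\frac{|\omega_i(t)|}{1+|t|^2}<\infty,
\end{equation*}
since the paths are H\"older of any order $H'\in(0,H)$ with $H<1$, so in particular sub-quadratic growth holds; here I use the Wiener-shift identity $\beta^H_i(\cdot,\theta_s\omega)=\omega_i(\cdot+s)-\omega_i(s)$ from \eqref{3} to get $\theta_t$-invariance of the exceptional set. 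Denote by $r_i(\omega)$ the (finite, measurable) value of that supremum, so $|\omega_i(t)|\le r_i(\omega)(1+|t|^2)$ for all $t$.

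Next comes the step of controlling the $i$-dependence so that $\sum_i r_i(\omega)^2$ can be made finite after rescaling. Since we are free to choose the $\tilde\rho_i$, I would set $\tilde\rho_i(\omega):=2^{-|i|}\max\{1,r_i(\omega)\}$ — more precisely, I would normalise by first intersecting over $i$ the invariant full-measure sets, calling the intersection $\bar\Omega$, which is still $(\theta_t)_{t\in\mathbb{R}}$-invariant of full measure, and on $\bar\Omega$ define $\tilde\rho(\omega)=(\tilde\rho_i(\omega))_{i\in\mathbb{Z}}$ this way; then $\tilde\rho(\omega)\in\ell^2$ with $\rho(\omega)=\|\tilde\rho(\omega)\|<\infty$, and $r_i(\omega)\le 2^{|i|}\tilde\rho_i(\omega)\le C\,\tilde\rho_i(\omega)\cdot 2^{|i|}$. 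Actually the cleanest bookkeeping is to note $r_i(\omega)\le \rho(\omega)\,2^{|i|}$ is false, so instead I would simply take $\tilde\rho_i(\omega)$ large enough that $|\omega_i(t)|\le\tilde\rho_i(\omega)(1+|t|^2)$ and $\sum_i\tilde\rho_i(\omega)^2<\infty$ simultaneously — possible because only countably many constraints are involved and each $r_i$ is finite a.s. — and call $\rho(\omega):=\|\tilde\rho(\omega)\|$.

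Finally I would estimate $\|W_j(t)\|^2$. Using the definition $W_1(t)=\sum_i a_i\omega_i(t)e^i$ and the pointwise bound above,
\begin{equation*}
\|W_1(t)\|^2=\sum_{i\in\mathbb{Z}}a_i^2\,\omega_i(t)^2\le\sum_{i\in\mathbb{Z}}a_i^2\,\tilde\rho_i(\omega)^2(1+|t|^2)^2\le(1+|t|^2)^2\sum_{i\in\mathbb{Z}}a_i^2\tilde\rho_i(\omega)^2.
\end{equation*}
Here I would bound $\sum_i a_i^2\tilde\rho_i(\omega)^2\le\|a\|^2\,\rho^2(\omega)$ (crudely, since $\tilde\rho_i^2\le\rho^2$ for each $i$; if a sharper constant is wanted one keeps $\sum a_i^2\tilde\rho_i^2$), and then use $(1+|t|^2)^2\le 2(1+|t|^4)$ to land on $\|W_1(t)\|^2\le 2\|a\|^2\rho^2(\omega)(1+|t|^4)$, and similarly for $W_2$ with $\|b\|^2$; taking the max of the two prefactors gives the stated inequality for $j=1,2$. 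The only mildly delicate point — the ``main obstacle'' — is the simultaneous choice of the $\tilde\rho_i$: one must verify that the growth constants $r_i(\omega)$ can be dominated by a square-summable sequence on a single $\theta$-invariant full-measure set, which I would handle exactly as above by a countable intersection together with an explicit geometrically-weighted enlargement of each $r_i$; everything else is a routine path-regularity estimate for fBm with $H<1$.
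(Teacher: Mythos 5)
Your overall architecture (per-component growth bound for the scalar fBm paths, countable intersection of $\theta$-invariant full-measure sets, then summation against the weights $a_i^2$, $b_i^2$) is the right one, and the first and last steps are fine. But the step you yourself flag as the ``main obstacle'' is where the argument genuinely breaks: you cannot choose $(\tilde\rho_i(\omega))_{i\in\mathbb{Z}}\in\ell^2$ with $|\omega_i(t)|\le\tilde\rho_i(\omega)(1+|t|^2)$ for all $i$ and $t$. Any such $\tilde\rho_i$ must dominate $r_i(\omega)=\sup_t|\omega_i(t)|/(1+|t|^2)$, and since the $\omega_i=\beta_i^H$ are independent, identically distributed, nondegenerate scalar fBms, the $r_i$ are i.i.d.\ with $\mathbb{P}(r_i>\varepsilon)>0$ for some $\varepsilon>0$ (e.g.\ $r_i\ge|\omega_i(1)|/4$); by the second Borel--Cantelli lemma infinitely many $r_i$ exceed $\varepsilon$ almost surely, so $(r_i)_i\notin\ell^2$ and no square-summable majorant exists. ``Countably many constraints, each finite a.s.''\ is not enough: the constraints $\tilde\rho_i\ge r_i$ and $\sum_i\tilde\rho_i^2<\infty$ are jointly unsatisfiable on a set of full measure. (Your first attempt $\tilde\rho_i=2^{-|i|}\max\{1,r_i\}$ fails for the complementary reason: it is square-summable but no longer dominates $r_i$, so the pointwise bound on $\omega_i$ is lost.)

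The fix is to never ask for unweighted square-summability of the growth constants and instead control the weighted sums directly, which is all the final estimate needs: $\|W_1(t)\|^2=\sum_i a_i^2\omega_i(t)^2\le(1+|t|)^4\sum_i a_i^2 r_i^2(\omega)$, so it suffices that $\sum_i a_i^2 r_i^2<\infty$ and $\sum_i b_i^2 r_i^2<\infty$ almost surely. This follows from $\mathbf{E}[r_1^2]<\infty$ (a consequence of Fernique's theorem, or of the Gaussian tail estimate for $\sup_{t}|\beta^H(t)|/(1+|t|)^2$, as in Lemma~1 of \cite{GKN}) together with monotone convergence: $\mathbf{E}\bigl[\sum_i a_i^2r_i^2\bigr]=\|a\|^2\,\mathbf{E}[r_1^2]<\infty$. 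One then defines $\rho^2(\omega)$ so that $\max\{\sum_i a_i^2r_i^2,\sum_i b_i^2r_i^2\}\le\max\{\|a\|^2,\|b\|^2\}\rho^2(\omega)$, and $(1+|t|)^4\le 8(1+|t|^4)$ gives the stated form up to the numerical constant. Note this is exactly how the analogous Lemma~\ref{property} is set up, with the weights folded into $\check\rho^2(\omega)=16\sum_i a_i^2\check\rho_i^2(\omega)$; the paper itself offers no proof of Lemma~\ref{estimate} beyond ``Obviously,'' and indeed its assertion that the per-component constants lie in $\ell^2$ is, for the reason above, not attainable as literally stated --- only the weighted version is.
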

\begin{proof}
Obviously.
\end{proof}

\begin{prop}\label{existence}
Let the above assumptions on $f$ be satisfied and $T>0$. Then system
\eqref{4} has a unique pathwise solution $\Psi=(\Psi(t))_{t\geq
0}=(u(t), v(t))_{t\geq 0}$. Furthermore, the solution satisfies
\begin{eqnarray*}
&&\sup_{t\in [0, T]}\|\Psi(t)\|_{\mathbb{E}}^2 \le
M[\|\Psi_0\|_{\mathbb{E}}^2+\sup_{t\in [0,
T]}(\|W_1(t)\|^2+\|W_2(t)\|^2)\nonumber\\
&& ~~~~~~~~~~~~~~~+\int_0^T(\|W_1(s)\|^{4p+2}
+\|W_1(s)\|^2+\|W_2(s)\|^2+1)ds],
\end{eqnarray*}
where $M$ is a positive constant independent of $T$.
\end{prop}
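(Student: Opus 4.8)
The plan is to remove the non-differentiable fractional driving terms by a deterministic translation, solve the resulting pathwise equation in the Hilbert space $\mathbb{E}=\ell^2\times\ell^2$ by classical ODE theory in a Banach space (in the spirit of \cite{GKN}), and then transfer a dissipative energy estimate back to $\Psi$. Fix $\omega\in\bar{\Omega}$ and set $P(t)=u(t)-W_1(t)$, $Q(t)=v(t)-W_2(t)$. Since $W_1,W_2$ are continuous in $t$ with values in $\ell^2$ by Lemma~\ref{estimate}, $(u,v)$ solves \eqref{4} on $[0,T]$ if and only if $(P,Q)$ solves the genuine pathwise differential equation
\begin{equation*}
\dot P=-\mathbb{A}(P+W_1)-\lambda(P+W_1)+f(P+W_1)-(Q+W_2),\qquad \dot Q=\varrho(P+W_1)-\sigma(Q+W_2),
\end{equation*}
with $P(0)=u_0$, $Q(0)=v_0$ (recall $W_j(0)=0$). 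Here $\mathbb{A}$ and the other linear operators are bounded on $\ell^2$, with $\|\mathbb{A}\|_{\ell^2\to\ell^2}\le 4$, while $f$ is smooth, hence locally Lipschitz on bounded subsets of $\ell^2$, and the right-hand side is continuous in $t$ through $W_1,W_2$.

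By the local existence theorem for ordinary differential equations in a Banach space (Picard iteration), there is a unique maximal solution $(P,Q)\in C([0,\tau_{\max});\mathbb{E})$. Uniqueness in fact follows directly from \eqref{5}: if $(P_1,Q_1)$ and $(P_2,Q_2)$ are two solutions and $p:=P_1-P_2$, $q:=Q_1-Q_2$, then using $(\mathbb{A}p,p)\ge 0$, the one-sided Lipschitz bound $(f(P_1+W_1)-f(P_2+W_1),p)\le-\gamma\|p\|^2$ and Young's inequality on the linear coupling,
\begin{equation*}
\frac{d}{dt}\bigl(\|p\|^2+\|q\|^2\bigr)\le C\bigl(\|p\|^2+\|q\|^2\bigr),
\end{equation*}
so Gronwall's lemma with zero initial datum forces $p\equiv q\equiv 0$; the same computation yields the pathwise contraction of solutions that is used later for the singleton attractor.

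The core of the argument is the a priori bound on $(P,Q)$ over $[0,T]$. I would work with the energy functional $\mathcal{E}(t)=\varrho\|P(t)\|^2+\|Q(t)\|^2$, the weight $\varrho$ on the $P$-component being chosen precisely so that the FitzHugh-Nagumo cross terms $-\varrho(Q,P)+\varrho(P,Q)$ cancel. Pairing the two equations with $P$ and $Q$ in $\ell^2$ and adding, one (a) drops $-\varrho(\mathbb{A}P,P)\le 0$ and absorbs the noise-linear terms $(\mathbb{A}W_1,P),(W_1,P),(W_2,P),(W_1,Q),(W_2,Q)$ into $\delta(\|P\|^2+\|Q\|^2)+C_\delta(\|W_1\|^2+\|W_2\|^2)$ by Young's inequality; and (b) treats the nonlinearity by applying \eqref{5} with second argument $W_1$ instead of $0$,
\begin{equation*}
(f(P+W_1),P)=\bigl(f(P+W_1)-f(W_1),P\bigr)+(f(W_1),P)\le -\tfrac{\gamma}{2}\|P\|^2+C\|f(W_1)\|^2,
\end{equation*}
after which the growth condition \eqref{6} gives $\|f(W_1)\|^2\le C(\|W_1\|^{4p+2}+\|W_1\|^2+1)$ — this is exactly where the exponent $4p+2$ in the statement comes from. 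Collecting terms and choosing $\delta$ small so that the dissipative constants $\lambda,\gamma,\sigma$ dominate, one obtains
\begin{equation*}
\frac{d}{dt}\mathcal{E}(t)\le -c\,\mathcal{E}(t)+C\bigl(\|W_1(t)\|^{4p+2}+\|W_1(t)\|^2+\|W_2(t)\|^2+1\bigr)
\end{equation*}
with $c>0$ and $C>0$ independent of $T$.

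Gronwall's lemma then gives, for $t\in[0,T]$, $\mathcal{E}(t)\le\mathcal{E}(0)+C\int_0^T(\|W_1(s)\|^{4p+2}+\|W_1(s)\|^2+\|W_2(s)\|^2+1)\,ds$, and since $W_j(0)=0$ we have $\mathcal{E}(0)=\varrho\|u_0\|^2+\|v_0\|^2\le C\|\Psi_0\|_{\mathbb{E}}^2$; in particular $\mathcal{E}$ stays finite on $[0,T]$, so $\tau_{\max}>T$ and $\Psi=(u,v)=(P+W_1,Q+W_2)$ is the unique pathwise solution of \eqref{4} on $[0,T]$ for every $T>0$. Finally, $\|\Psi(t)\|_{\mathbb{E}}^2\le 2\|P(t)\|^2+2\|Q(t)\|^2+2\|W_1(t)\|^2+2\|W_2(t)\|^2\le C'\mathcal{E}(t)+2\sup_{[0,T]}(\|W_1\|^2+\|W_2\|^2)$, and taking the supremum over $t\in[0,T]$ yields the asserted estimate, with $M$ independent of $T$ thanks to the $-c\,\mathcal{E}$ feedback (without dissipativity one would only get a constant of order $e^{cT}$). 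The main obstacle is step (b): a naive estimate of the degree-$(2p+1)$ nonlinearity produces a term of order $\|P\|^{2p+1}\|W_1\|$, superlinear in $\mathcal{E}^{1/2}$, which would make the Gronwall bound explode in finite time; re-centering condition \eqref{5} at $W_1$ converts it into the pure forcing term $\|f(W_1)\|^2$ of order $\|W_1\|^{4p+2}$, which is integrable on $[0,T]$ by Lemma~\ref{estimate}. A minor caveat is that $f:\ell^2\to\ell^2$ and the finiteness of $\|f(W_1)\|$ implicitly require $f_i(0)=0$, or at least $(f_i(0))_{i\in\mathbb{Z}}\in\ell^2$.
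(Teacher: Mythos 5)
Your proposal follows the same overall architecture as the paper's proof: subtract the fBm paths ($\tilde u=u-W_1$, $\tilde v=v-W_2$, your $(P,Q)$), observe that the transformed system is a genuine pathwise ODE in $\ell^2\times\ell^2$ with continuous right-hand side, get local existence and uniqueness from standard Banach-space ODE theory, run a weighted energy estimate (the paper uses $\|\tilde u\|^2+\tfrac1\varrho\|\tilde v\|^2$, you use the proportional $\varrho\|P\|^2+\|Q\|^2$; either way the FitzHugh--Nagumo cross terms cancel), apply Gronwall, and transfer back to $\Psi$. So in structure you have reproduced the paper's argument.

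The one substantive difference is your step (b), and it is worth flagging because your version is the more robust one. The paper's inequality \eqref{11} estimates
\begin{equation*}
2(f(\tilde u+W_1),\tilde u)=2(f(\tilde u+W_1),\tilde u+W_1)-2(f(\tilde u+W_1),W_1)\le -\gamma\|\tilde u+W_1\|^2+2|f(\tilde u+W_1)|\,|W_1|
\end{equation*}
and then claims the right-hand side is bounded by $c_1(\|W_1\|^{4p+2}+\|W_1\|^2+1)$. With only the quadratic dissipation $-\gamma\|w\|^2$ from \eqref{5} and the growth $|f(w)|\le c_f(|w|^{2p+1}+1)$ from \eqref{6}, the expression $-\gamma x^2+2c_f x^{2p+1}y$ is unbounded in $x$ for fixed $y>0$ once $p\ge 1$, so that step does not close as written (it would require a stronger, degree-$(2p+2)$ dissipativity). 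Your recentering of \eqref{5} at $W_1$ rather than at $0$, namely $(f(P+W_1)-f(W_1),P)\le-\gamma\|P\|^2$ followed by Young's inequality on $(f(W_1),P)$, converts the nonlinearity into the pure forcing term $\|f(W_1)\|^2\lesssim\|W_1\|^{4p+2}+\|W_1\|^2+1$ with no superlinear feedback, which is exactly what the Gronwall step needs; this is the correct mechanism behind the exponent $4p+2$ in the statement. Your closing caveat is also well taken and applies equally to the paper: for $\|f(W_1)\|$ (or the paper's ``$+1$'' term summed over $i\in\mathbb{Z}$) to be finite in $\ell^2$ one implicitly needs $f_i(0)=0$ or at least $(f_i(0))_{i\in\mathbb{Z}}\in\ell^2$, an assumption the paper never states. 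Apart from making that hypothesis explicit, your proof is complete and, on the nonlinear estimate, tighter than the published one.
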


\begin{proof}
Let $\tilde{u}(t)=u(t)-W_1(t)$ and $\tilde{v}(t)=v(t)-W_2(t)$,
system \eqref{4} has a solution $\Psi=(\Psi(t))_{t\geq 0}$ for all
$\omega\in \Omega$ if and only if the following system
\begin{eqnarray}\label{8}
\left\{
\begin{array}{l}
\tilde{u}(t)=u(0)+\int_0^t(-\mathbb{A}\tilde{u}(s)-\lambda
\tilde{u}(s)+f(\tilde{u}(s)+W_1(s))\\
~~~~~~~~~~~~~~~~-\tilde{v}(s)-\mathbb{A}W_1(s)-\lambda
W_1(s)-W_2(s))ds,\\
\tilde{v}(t)=v(0)+\int_0^t(\varrho \tilde{u}(s)
-\sigma \tilde{v}(s)-\varrho W_1(s)-\sigma W_2(s))ds,\\
 u(0)=u_{0}=(\tilde{u}_{i0})_{i\in \mathbb{Z}},\quad
 v(0)=v_{0}=(\tilde{v}_{i0})_{i\in \mathbb{Z}}
\end{array}
\right.
\end{eqnarray}
has a unique pathwise solution for $t\in [0, T]$. However, since the
integrand is pathwise continuous, the fundamental theorem of
calculus says that the left hand side of \eqref{8} is pathwise
differentiable. Thus, for a fixed $\omega\in \Omega$, system
\eqref{8} is the pathwise system of random ODEs
\begin{eqnarray}\label{9}
\left\{
\begin{array}{l}
\frac{d\tilde{u}(t)}{dt}=-\mathbb{A}\tilde{u}(t)-\lambda
\tilde{u}(t)+f(\tilde{u}(t)+W_1(t))\\
~~~~~~~~~~~~~~~~-\tilde{v}(t)-\mathbb{A}W_1(t)-\lambda
W_1(t)-W_2(t),\\
\frac{d\tilde{v}(t)}{dt}=\varrho \tilde{u}(t)
-\sigma \tilde{v}(t)+\varrho W_1(t)-\sigma W_2(t),\\
 u(0)=u_{0}=(\tilde{u}_{i0})_{i\in \mathbb{Z}},\quad
 v(0)=v_{0}=(\tilde{v}_{i0})_{i\in \mathbb{Z}}.
\end{array}
\right.
\end{eqnarray}
Since $f(u)$ is a continuous function, and the assumptions on $f$
are satisfied, by the standard argument on existence theorem for
ODEs, it follows that system \eqref{9} possesses a local solution in
a small interval $[0, \tau(\omega)]$, which means system \eqref{4}
has a unique local solution in the same small interval $[0,
\tau(\omega)]$. Here, we remain to show that the local solution is a
global one.

For a fixed $\omega\in \Omega$, by taking the inner product of
\eqref{9} with $(\tilde{u}, \tilde{v})$ in $\mathbb{E}$, it follows
that
\begin{eqnarray}\label{10}
&&\|\tilde{u}(t)\|^2+\frac{1}{\varrho}\|\tilde{v}(t)\|^2=
\|\tilde{u}_0\|^2+\frac{1}{\varrho}\|\tilde{v}_0\|^2+
2\int_0^t(-\mathbb{A}\tilde{u}(s),\tilde{u}(s))ds\nonumber\\
&&~~+2\int_0^t(f(\tilde{u}(s)+W_1(s)), \tilde{u}(s))ds
+2\int_0^t(-\mathbb{A}W_1(s), \tilde{u}(s))ds\nonumber\\
&&~~~-2\int_0^t(\lambda W_1(s), \tilde{u}(s))ds-2\int_0^t(W_2(s),
\tilde{u}(s))ds\nonumber\\
&&~~~~-2\lambda\int_0^t\|\tilde{u}(s)\|^2ds
-\frac{2\sigma}{\varrho}\int_0^t\|\tilde{v}(s)\|^2ds\nonumber\\
&&~~~~~~-\frac{2\sigma}{\varrho}\int_0^t(W_2(s), \tilde{v}(s))ds
+2\int_0^t(W_1(s), \tilde{v}(s))ds.
\end{eqnarray}
By \eqref{5} and \eqref{6}, we obtain that
\begin{eqnarray}\label{11}
&&2(f(\tilde{u}(s)+W_1(s)),
\tilde{u}(s))\nonumber\\
&=&2(f(\tilde{u}(s)+W_1(s)), \tilde{u}(s)+W_1(s))
-2(f(\tilde{u}(s)+W_1(s), W_1(s))\nonumber\\
&\le& -\gamma \|\tilde{u}(s)+W_1(s)\|^2+2|f(\tilde{u}(s)
+W_1(s)||W_1(s)|\nonumber\\
&\le& c_1(\|W_1(s)\|^{4p+2}+\|W_1(s)\|^2+1),
\end{eqnarray}
where $c_1$ is a positive constant depends on $\gamma, c_f$ and $p$.
By Young's inequality, it yields that
\begin{eqnarray}\label{12}
&&2(-\mathbb{A}\tilde{u}(s),\tilde{u}(s))+2(f(\tilde{u}(s)+W_1(s)),
\tilde{u}(s))+2(-\mathbb{A}W_1(s), \tilde{u}(s))\nonumber\\
&\le& \lambda \|\tilde{u}(s)\|^2+c_2(\|W_1(s)\|^2+\|W_2(s)\|^2),
\end{eqnarray}
where $c_2$ is a positive constant depends on $\lambda$, and

\begin{eqnarray}\label{13}
&&\frac{2\sigma}{\varrho}(W_2(s), \tilde{v}(s))
+2(W_1(s), \tilde{v}(s))\nonumber\\
&\le& \frac{\sigma}{\varrho}\|\tilde{v}(s)\|^2
+c_3(\|W_1(s)\|^2+\|W_2(s)\|^2),
\end{eqnarray}
where $c_3$ is a positive constant depends on $\varrho$ and
$\sigma$. Let $\alpha=\min\{\lambda, \sigma\}$ and combine
\eqref{11}-\eqref{13}with \eqref{10}, for $t\ge0$, we get
\begin{eqnarray}\label{14}
&&\|\tilde{u}(t)\|^2+\frac{1}{\varrho}\|\tilde{v}(t)\|^2\le
\|\tilde{u}_0\|^2+\frac{1}{\varrho}\|\tilde{v}_0\|^2-\alpha
\int_0^t(\|\tilde{u}(s)\|^2
+\frac{1}{\varrho}\|\tilde{v}(s)\|^2)ds\nonumber\\
&&~~~~~~~~~~~~~+c\int_0^t(\|W_1(s)\|^{4p+2}
+\|W_1(s)\|^2+\|W_2(s)\|^2+1)ds\nonumber\\
&&~~~~~~~~~~~\le
\|\tilde{u}_0\|^2+\frac{1}{\varrho}\|\tilde{v}_0\|^2\nonumber\\
&&
~~~~~~~~~~~~~~~+c\int_0^t(\|W_1(s)\|^{4p+2}+\|W_1(s)\|^2+\|W_2(s)\|^2+1)ds,
\end{eqnarray}
where $c$ is a positive constant depends on $\varrho, \sigma,
\lambda, \gamma, c_f$ and $p$. Hence, from \eqref{14}, we know that
$\|\tilde{u}(t)\|^2+\frac{1}{\varrho}\|\tilde{v}(t)\|^2$ is bounded
by a continuous function, which implies the global existence of a
solution on interval $[0, T]$. Furthermore, for all $\omega\in
\Omega$, it follows that
\begin{eqnarray}\label{15}
&&\sup_{t\in [0,
T]}(\|u(t)\|^2+\frac{1}{\varrho}\|v(t)\|^2)=\sup_{t\in [0,
T]}(\|\tilde{u}(t)+W_1(t)\|^2
+\frac{1}{\varrho}\|\tilde{v}(t)+W_2(t)\|^2)
\nonumber\\
&&~~~~~~~~~~~\le 2(\|u_0\|^2
+\frac{1}{\varrho}\|v_0\|^2)+2\sup_{t\in [0,
T]}(\|W_1(t)\|^2+\frac{1}{\varrho}\|W_2(t)\|^2)\nonumber\\
&&
~~~~~~~~~~~~~~~+2c\int_0^T(\|W_1(s)\|^{4p+2}+\|W_1(s)\|^2+\|W_2(s)\|^2+1)ds.
\end{eqnarray}
According to Lemma \ref{estimate}, we know that the right side of
\eqref{15} is well defined. Let $\tilde{\alpha}=\frac{\max\{1,
\frac{1}{\varrho}\}}{\min\{1, \frac{1}{\varrho}\}}$, the proof is
complete.
\end{proof}

\begin{prop} \label{CRDS}
The solution of \eqref{4} determinants a continuous random dynamical
system $\varphi: \mathbb{R^+}\times \Omega\times
\mathbb{E}\rightarrow \mathbb{E}$, which is given by
\begin{eqnarray}\label{19}
\varphi(t, \omega, \Psi_0)=\Psi_0+\int_0^tG(\Psi(s))ds+\eta(t,
\omega) \ \ \mbox{for} \ \ t\geq 0,
\end{eqnarray}
where $G(\Psi(t))=L\Psi(t)+F(\Psi(t))$ and
\begin{eqnarray*}
L=\left ( \begin{array}{cc}
     -\mathbb{A}-\lambda & \ \ -1\\
     \varrho&
     \ \ -\sigma
\end{array} \right),\
F(\Psi)=\left ( \begin{array}{c}
     f(u)\\
    0
\end{array} \right), \ \eta(t, \omega)=\left ( \begin{array}{c}
     W_1(t, \omega)\\
     W_2(t, \omega)
\end{array} \right).
\end{eqnarray*}
\end{prop}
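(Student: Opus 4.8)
The plan is to verify, one by one, the properties of a continuous random dynamical system listed in Definition~\ref{RDS}, relying on the global existence and uniqueness already furnished by Proposition~\ref{existence}. Given $\omega$ and $\Psi_0\in\mathbb{E}$, let $\Psi(\cdot\,;\omega,\Psi_0)$ denote the unique global pathwise solution of \eqref{4} and set $\varphi(t,\omega,\Psi_0):=\Psi(t;\omega,\Psi_0)$. By construction this $\varphi$ satisfies the integral identity \eqref{19} with $G=L+F$ and $\eta=(W_1,W_2)^{\top}$; in particular $\varphi(0,\omega,\Psi_0)=\Psi_0$, which is property~(i), and continuity of $t\mapsto\varphi(t,\omega,\Psi_0)$ is immediate from \eqref{19} since $s\mapsto G(\Psi(s))$ is pathwise continuous and $\eta(\cdot,\omega)$ is continuous.

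For the cocycle property~(ii) I would use the shift relation for the driving noise. From \eqref{3}, $\theta_s\omega(\cdot)=\omega(\cdot+s)-\omega(s)$, hence for $j=1,2$ and all $t,s$,
\begin{equation*}
W_j(t,\theta_s\omega)=W_j(t+s,\omega)-W_j(s,\omega).
\end{equation*}
Fix $s\ge 0$ and put $\Psi_1(t):=\varphi(s+t,\omega,\Psi_0)$ and $\Psi_2(t):=\varphi(t,\theta_s\omega,\varphi(s,\omega,\Psi_0))$. Writing \eqref{19} for $\varphi(s+t,\omega,\Psi_0)$ and for $\varphi(s,\omega,\Psi_0)$, subtracting, substituting $r=s+r'$ in the integral and invoking the displayed shift identity (so that $\eta(s+t,\omega)-\eta(s,\omega)=\eta(t,\theta_s\omega)$), one finds that $\Psi_1$ solves the integral equation \eqref{4} on $[0,\infty)$ with initial value $\varphi(s,\omega,\Psi_0)$ and driving path $t\mapsto W_j(t,\theta_s\omega)$; $\Psi_2$ does so by definition. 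By the uniqueness part of Proposition~\ref{existence}, $\Psi_1\equiv\Psi_2$, which is exactly property~(ii).

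Property~(iii), continuity of $\varphi(t,\omega,\cdot)$ on $\mathbb{E}$, I would obtain from a Gronwall estimate for the difference of two solutions $\Psi=(u,v)$, $\bar\Psi=(\bar u,\bar v)$ with data $\Psi_0,\bar\Psi_0$ and the same $\omega$. Passing to the transformed variables $\tilde u=u-W_1$, $\tilde v=v-W_2$ as in the proof of Proposition~\ref{existence}, the difference $\xi:=(\tilde u-\bar{\tilde u},\tilde v-\bar{\tilde v})$ solves a pathwise random ODE with no noise term, and the nonlinear contribution is $f(\tilde u+W_1)-f(\bar{\tilde u}+W_1)$, whose arguments differ precisely by $\tilde u-\bar{\tilde u}$. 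Taking the inner product in $\mathbb{E}$ with the $1/\varrho$ weight on the second component, as in \eqref{14}, using $(\mathbb{A}z,z)\ge 0$, the one-sided dissipative Lipschitz condition \eqref{5}, and Young's inequality on the coupling terms $-v$ and $\varrho u$, yields $\frac{d}{dt}\|\xi(t)\|_{\mathbb{E}}^2\le C\|\xi(t)\|_{\mathbb{E}}^2$ with $C$ depending only on $\lambda,\varrho,\sigma,\gamma$. Since the noise is common, $\xi(0)=\Psi_0-\bar\Psi_0$, and Gronwall's lemma gives
\begin{equation*}
\|\varphi(t,\omega,\Psi_0)-\varphi(t,\omega,\bar\Psi_0)\|_{\mathbb{E}}^2\le e^{Ct}\,\|\Psi_0-\bar\Psi_0\|_{\mathbb{E}}^2,
\end{equation*}
so $\varphi(t,\omega,\cdot)$ is Lipschitz continuous, uniformly on bounded time intervals; together with the continuity in $t$ already noted, $\varphi$ is jointly continuous in $(t,\Psi_0)$ for each $\omega$.

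Finally, measurability: for fixed $(t,\Psi_0)$ the map $\omega\mapsto\varphi(t,\omega,\Psi_0)$ is $(\mathcal{F},\mathcal{B}(\mathbb{E}))$-measurable because the solution is the locally uniform limit of the Picard iterates associated with \eqref{19}, each measurable in $\omega$ (the map $\omega\mapsto W_j(\cdot,\omega)$ is measurable and integration of continuous integrands preserves measurability). A map continuous in $(t,\Psi_0)$ and measurable in $\omega$ is a Carath\'eodory map, hence jointly $(\mathcal{B}(\mathbb{R}^+)\times\mathcal{F}\times\mathcal{B}(\mathbb{E}),\mathcal{B}(\mathbb{E}))$-measurable, which completes Definition~\ref{RDS}. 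The step I expect to be the main obstacle is the continuous-dependence estimate of the third paragraph: one must route the nonlinearity through the Ornstein--Uhlenbeck-type shift so that \eqref{5} applies to arguments differing exactly by $\tilde u-\bar{\tilde u}$, and simultaneously absorb the skew coupling $(-v,\varrho u)$ and the asymmetric scaling of the two components — which is exactly why the weighted norm on $\mathbb{E}$ and the constant $\alpha=\min\{\lambda,\sigma\}$ from \eqref{14} are the right bookkeeping devices; once this is in place, (i), (ii) and the measurability are routine.
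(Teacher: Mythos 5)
Your proposal is correct and follows essentially the same route as the paper: the integral representation \eqref{19} together with the shift identity $\eta(\tau+t,\omega)=\eta(\tau,\theta_t\omega)+\eta(t,\omega)$ yields the cocycle property, the paper closing this step by a direct change of variables in the integral where you instead invoke uniqueness of solutions. You additionally supply the continuity-in-initial-data estimate (Gronwall with \eqref{5} and the weighted norm on $\mathbb{E}$, under which the coupling terms $-v$ and $\varrho u$ in fact cancel exactly) and the measurability argument, both of which the paper merely asserts; these additions are correct.
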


\begin{proof}
For the need of making the relations clear between
$G(\Psi(t))(\cdot)$ and $\omega$, we write $G(\Psi(t))(\omega)$
instead if necessary. Note that \eqref{3} is satisfied for
$\omega\in \Omega$ and by the definition of $(\theta_t)_{t\in
\mathbb{R}}$, we have the property
\begin{equation*}
\eta(\tau+t, \omega)=\eta(\tau, \theta_{t}\omega)+\eta(t, \omega) \
\ \mbox{for all} \ \ t, \tau\in \mathbb{R}.
\end{equation*}
By Proposition \ref{existence} we know that $\varphi$ solves
\eqref{4}, thus $\varphi$ is measurable and satisfies $\varphi(0,
\omega, \cdot)=\rm id_{\mathbb{E}}$. It remains to verify that the
cocycle property in Definition \ref{RDS}. Let $t, \tau\in
\mathbb{R^+}, \omega\in \Omega$ and $\Psi_0\in \mathbb{E}$, it
yields from \eqref{3} that
\begin{eqnarray*}
&&\varphi(t+\tau, \omega, \Psi_0)\nonumber\\
&=&\Psi_0+\int_0^{t+\tau} G(\Psi(s))(\omega)ds+\eta(t+\tau,
\omega)\nonumber\\
&=& \Psi_0+\int_0^tG(\Psi(s))(\omega)ds+\eta(t, \omega)+\int_t^{t
+\tau}G(\Psi(s))(\omega)ds+\eta(\tau,
\theta_t\omega)\nonumber\\
&=& \Psi(t)+\int_0^{\tau} G(\Psi(s)) (\theta_t\omega)ds+\eta(\tau,
\theta_t\omega)\nonumber\\
&=& \varphi(\tau, \theta_{t}\omega, \cdot)\circ \varphi(t, \omega,
\Psi_0),
\end{eqnarray*}
which completes the proof.
\end{proof}

\section{Existence of a Random Attractor}

In this section, we will prove the existence of a random attractor
for the RDS defined in Proposition \ref{CRDS}. Sometimes, for the
need of making the relations between $\bar{u}(\cdot)$ (or $\bar{v}$,
$\Psi$, $\bar{\Phi}$) and $\omega$ more explicitly, we will write
$\bar{u}(\omega)$ (or $\bar{v}(\omega)$, $\Psi(\omega)$,
$\bar{\Phi}(\omega)$) instead if necessary.

Consider the following fractional Ornstein-Uhlenbeck processes
\begin{equation}\label{21}
du(t)=-\lambda u(t)dt+dW_1(t), \ dv(t)=-\sigma v(t)dt+dW_2(t),
\end{equation}
where $\lambda, \sigma$ defined in \eqref{4} and $W_1(t), W_2(t)$
denote one-dimensional fractional Brownian motions. They have the
explicit solutions
\begin{equation}\label{22}
u(t)=u_0e^{-\lambda t}+e^{-\lambda t}\int_0^t e^{\lambda s}dW(s), \
v(t)=v_0e^{-\sigma t}+e^{-\sigma t}\int_0^t e^{\sigma s}dW(s).
\end{equation}
Take the pathwise pullback limits, we get the stochastic stationary
solutions
\begin{equation}\label{23}
\bar{u}(t)=e^{-\lambda t}\int_{-\infty}^te^{\lambda s}dW(s),\
\bar{v}(t)=e^{-\sigma t}\int_{-\infty}^te^{\sigma s}dW(s), \ \ t\in
\mathbb{R},
\end{equation}
which are called the fractional Ornstein-Uhlenbeck solutions. We
have the following properties:

\begin{lemma} \label{property}
There exists positive random constants
$(\check{\rho}_i(\omega))_{i\in \mathbb{Z}},
(\hat{\rho}_i(\omega))_{i\in \mathbb{Z}}\in \ell^2$ and
$\check{\rho}^2(\omega)=16\sum_{i\in
\mathbb{Z}}a_i^2\check{\rho}_i^2(\omega),
\hat{\rho}^2(\omega)=16\sum_{i\in
\mathbb{Z}}b_i^2\hat{\rho}_i^2(\omega)$ for all $\omega\in \Omega$,
the Riemann-Stieltjes integrals in \eqref{23} are well defined in
$\ell^2$. Moreover, for all $\omega\in \Omega, t\in \mathbb{R}$, we
have
\begin{equation*}
\|e^{-\lambda t}\int_{-\infty}^te^{\lambda s} dW_1(s)\|\leq
\check{\rho}(\omega)(1+|t|)^2, \ \|e^{-\sigma
t}\int_{-\infty}^te^{\sigma s} dW_2(s)\|\leq
\hat{\rho}(\omega)(1+|t|)^2.
\end{equation*}
\end{lemma}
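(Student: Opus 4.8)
The plan is to linearize each scalar integral in \eqref{23} by a pathwise integration by parts --- this is where $H\in(1/2,1)$ enters --- reducing it to an ordinary improper Riemann integral, and then to estimate that integral by hand, coordinate by coordinate, using the polynomial growth of the fractional Brownian paths established in Lemma \ref{estimate}.

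Fix $\omega$, $t\in\mathbb{R}$ and $a<t$, and look at the $i$-th coordinate $\omega_i(\cdot)=\beta_i^H(\cdot,\omega)$. Since $s\mapsto e^{\lambda s}$ is $C^1$ on $[a,t]$ and $\omega_i$ is H\"older continuous of some order $H'\in(1/2,H)$, the generalized (Young) Riemann--Stieltjes integral $\int_a^t e^{\lambda s}\,d\omega_i(s)$ is well defined (cf. \cite{Zahle}), and integration by parts gives
\begin{equation*}
e^{-\lambda t}\int_a^t e^{\lambda s}\,d\omega_i(s)=\omega_i(t)-e^{-\lambda(t-a)}\omega_i(a)-\lambda\int_a^t e^{-\lambda(t-s)}\omega_i(s)\,ds .
\end{equation*}
Arguing as in Lemma \ref{estimate} --- using the H\"older regularity, hence polynomial growth, of the paths --- there are $\tilde\rho_i(\omega)>0$ with $(\tilde\rho_i(\omega))_{i\in\mathbb{Z}}\in\ell^2$ such that $|\omega_i(s)|\le\tilde\rho_i(\omega)(1+|s|)^2$ for all $s\in\mathbb{R}$. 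Hence $e^{-\lambda(t-a)}|\omega_i(a)|\to 0$ and $\int_a^t e^{-\lambda(t-s)}|\omega_i(s)|\,ds$ converges as $a\to-\infty$, so the pathwise pullback limit defining $\bar u$ in \eqref{23} exists coordinatewise and
\begin{equation*}
e^{-\lambda t}\int_{-\infty}^t e^{\lambda s}\,d\omega_i(s)=\omega_i(t)-\lambda\int_{-\infty}^t e^{-\lambda(t-s)}\omega_i(s)\,ds .
\end{equation*}

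Next I would estimate. The elementary inequality $\lambda\int_{-\infty}^t e^{-\lambda(t-s)}(1+|s|)^2\,ds\le c_\lambda(1+|t|)^2$, with $c_\lambda$ depending only on $\lambda$ (substitute $s=t-r$ and integrate the resulting polynomial in $r$ against $\lambda e^{-\lambda r}$), together with the growth bound above yields
\begin{equation*}
\Big|e^{-\lambda t}\int_{-\infty}^t e^{\lambda s}\,d\omega_i(s)\Big|\le(1+c_\lambda)\,\tilde\rho_i(\omega)\,(1+|t|)^2,\qquad i\in\mathbb{Z}.
\end{equation*}
Squaring, multiplying by $a_i^2$ and summing over $i$ --- the sum $\sum_i a_i^2\tilde\rho_i^2(\omega)$ being finite since $(a_i)\in\ell^2$ and $(\tilde\rho_i(\omega))\in\ell^2$ (hence bounded) --- shows in particular that the $\ell^2$-valued integral $e^{-\lambda t}\int_{-\infty}^t e^{\lambda s}\,dW_1(s)=\bar u(t)$ is well defined, and
\begin{equation*}
\Big\|e^{-\lambda t}\int_{-\infty}^t e^{\lambda s}\,dW_1(s)\Big\|^2\le(1+c_\lambda)^2(1+|t|)^4\sum_{i\in\mathbb{Z}}a_i^2\tilde\rho_i^2(\omega).
\end{equation*}
Setting $\check\rho_i(\omega):=\tfrac14(1+c_\lambda)\tilde\rho_i(\omega)$, so that $\check\rho^2(\omega)=16\sum_i a_i^2\check\rho_i^2(\omega)$ as in the statement, gives exactly $\|e^{-\lambda t}\int_{-\infty}^t e^{\lambda s}\,dW_1(s)\|\le\check\rho(\omega)(1+|t|)^2$. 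Repeating the argument with $(\lambda,a_i,W_1)$ replaced by $(\sigma,b_i,W_2)$ produces the corresponding $\hat\rho_i,\hat\rho$ and the bound for $\bar v$.

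The only genuinely delicate point is the second paragraph: justifying the pathwise Riemann--Stieltjes integral and the integration-by-parts identity in the fractional setting (this is exactly where $H>1/2$ is needed, via Young's theory), and checking that the boundary term $e^{-\lambda(t-a)}\omega_i(a)$ really vanishes as $a\to-\infty$ --- which it does because the fBm paths grow only polynomially while $e^{-\lambda(t-a)}$ decays exponentially in $|a|$. Once the stationary solution is represented as an ordinary convolution-type integral, everything else is the routine one-dimensional estimate performed coordinatewise in $\ell^2$.
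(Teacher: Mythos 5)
Your argument is correct and is, in substance, exactly the proof the paper delegates to the literature: the paper's entire proof of this lemma reads ``By the Lemma 1 in \cite{GKN}, we can easily get the conclusion'', and that cited lemma is established by precisely your reduction --- Young/Z\"ahle integration by parts turning the pathwise Riemann--Stieltjes integral into an ordinary convolution-type integral, followed by the polynomial growth bound $|\omega_i(s)|\le\tilde\rho_i(\omega)(1+|s|)^2$ on the fBm paths and the elementary estimate $\lambda\int_{-\infty}^te^{-\lambda(t-s)}(1+|s|)^2\,ds\le c_\lambda(1+|t|)^2$. Your write-up additionally carries out the coordinatewise summation in $\ell^2$, which the paper leaves entirely implicit, and your bookkeeping with the factor $16$ correctly reverse-engineers the normalization in the statement.

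One point deserves more care: your parenthetical justification that $\sum_i a_i^2\tilde\rho_i^2(\omega)<\infty$ ``since $(\tilde\rho_i(\omega))\in\ell^2$ (hence bounded)''. For independent, identically distributed fBm coordinates the random growth constants $\tilde\rho_i(\omega)$ are i.i.d.\ and nondegenerate, so they are almost surely \emph{not} square-summable over $i\in\mathbb{Z}$. The correct route is that $\mathbf{E}\,\tilde\rho_i^2$ is finite and independent of $i$, whence $\mathbf{E}\sum_i a_i^2\tilde\rho_i^2=\sum_i a_i^2\,\mathbf{E}\,\tilde\rho_i^2<\infty$ because $(a_i)\in\ell^2$, and therefore the weighted sum is finite on a $\theta$-invariant set of full measure. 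This imprecision is inherited from the paper's own statements of Lemma \ref{estimate} and of the present lemma (both assert $\ell^2$-membership of the unweighted random constants), and only the weighted sums $\sum_i a_i^2\check\rho_i^2$ and $\sum_i b_i^2\hat\rho_i^2$ are ever used, so your proof survives with this one-line repair.
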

\begin{proof}
By the Lemma 1 in \cite{GKN}, we can easily get the conclusion.
\end{proof}

Now, we are in the position to state the main result.
\begin{theorem}
Assume that the conditions on $f$ are satisfied. Then the random
dynamical system $\varphi$ has a unique random equilibrium, which
constitutes a singleton sets random attractor.
\end{theorem}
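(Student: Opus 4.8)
The plan is to bypass the abstract criterion of Theorem \ref{theorem 1} (which would require a \emph{compact} absorbing set, not readily available on the lattice) and instead build the random attractor directly from a unique random equilibrium, combining three ingredients: a pullback absorbing ball of \emph{tempered} radius obtained via a fractional Ornstein--Uhlenbeck (OU) change of variables, an exponential \emph{pathwise} contraction between any two solutions of \eqref{4}, and the stationarity of the fractional OU solutions \eqref{23}. This is the strategy of \cite{GKN} adapted to the lattice system.

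First I would produce the absorbing family. Substituting the fractional OU solutions $\bar u,\bar v$ of \eqref{21} into \eqref{4}, set $z(t)=u(t)-\bar u(t)$, $w(t)=v(t)-\bar v(t)$. Since the $W_1(t),W_2(t)$ terms cancel, $(z,w)$ solves, for each fixed $\omega$, a system whose right-hand side is continuous in $t$, hence pathwise differentiable: $\dot z=-\mathbb{A}z-\lambda z+f(z+\bar u)-\mathbb{A}\bar u-w-\bar v$ and $\dot w=\varrho z-\sigma w+\varrho\bar u$. Pairing in $\mathbb{E}$ with $(z,\tfrac1\varrho w)$, the coupling terms $-(w,z)+(z,w)$ cancel, $(\mathbb{A}z,z)\ge 0$, and the one-sided dissipativity \eqref{5} applied to $f(z+\bar u)-f(\bar u)$ together with the growth bound \eqref{6} (to estimate $\|f(\bar u)\|$) and Young's inequality give, exactly as in the proof of Proposition \ref{existence}, an inequality $\tfrac{d}{dt}\bigl(\|z\|^2+\tfrac1\varrho\|w\|^2\bigr)\le-\alpha\bigl(\|z\|^2+\tfrac1\varrho\|w\|^2\bigr)+c\bigl(\|\bar u\|^{4p+2}+\|\bar u\|^2+\|\bar v\|^2+1\bigr)$ for suitable $\alpha,c>0$. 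A Gronwall argument started at time $-t$, together with the polynomial-in-$|s|$ bounds of Lemma \ref{property} on $\bar u,\bar v$ along the orbit, shows that $\int_{-\infty}^0 e^{\alpha s}\bigl(\|\bar u(\theta_s\omega)\|^{4p+2}+\|\bar u(\theta_s\omega)\|^2+\|\bar v(\theta_s\omega)\|^2+1\bigr)\,ds$ is finite and defines a tempered random variable; undoing the transformation and using the equivalence of $\|\cdot\|_{\mathbb{E}}$ with $(\|\cdot\|^2+\tfrac1\varrho\|\cdot\|^2)^{1/2}$ yields a closed ball $B(\omega)\subset\mathbb{E}$ of tempered radius $\rho_B(\omega)$ that pullback-absorbs every $D\in\mathcal D$; in particular $B\in\mathcal D$.

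Next comes the contraction, which is the structural heart of the argument. For two solutions $\Psi^{(1)}=(u^{(1)},v^{(1)})$, $\Psi^{(2)}=(u^{(2)},v^{(2)})$ of \eqref{4} driven by the same $\omega$, the difference $(U,V)=(u^{(1)}-u^{(2)},v^{(1)}-v^{(2)})$ again has the fractional Brownian terms cancel, so it is pathwise differentiable and solves $\dot U=-\mathbb{A}U-\lambda U+f(u^{(1)})-f(u^{(2)})-V$, $\dot V=\varrho U-\sigma V$. Pairing with $(U,\tfrac1\varrho V)$ in $\mathbb{E}$, using $(\mathbb{A}U,U)\ge 0$, the cancellation of the coupling and \eqref{5} in the form $(f(u^{(1)})-f(u^{(2)}),U)\le-\gamma\|U\|^2$, one obtains $\tfrac{d}{dt}\bigl(\|U\|^2+\tfrac1\varrho\|V\|^2\bigr)\le-2\delta\bigl(\|U\|^2+\tfrac1\varrho\|V\|^2\bigr)$ with $\delta=\min\{\lambda+\gamma,\sigma\}>0$, hence $\|\Psi^{(1)}(t)-\Psi^{(2)}(t)\|_{\mathbb{E}}^2\le\tilde\alpha\,e^{-2\delta t}\|\Psi^{(1)}(0)-\Psi^{(2)}(0)\|_{\mathbb{E}}^2$ for all $t\ge0$, where $\tilde\alpha=\max\{1,\tfrac1\varrho\}/\min\{1,\tfrac1\varrho\}$. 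Thus $\varphi(t,\omega,\cdot)$ is a uniform contraction with rate $\delta$, uniformly in $\omega$.

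Finally I combine the two. Fix $\omega$ and $\Psi_0\in\mathbb{E}$ (viewed as the constant random set $\{\Psi_0\}\in\mathcal D$). For $t_2>t_1$ the cocycle property of Proposition \ref{CRDS} gives $\varphi(t_2,\theta_{-t_2}\omega,\Psi_0)=\varphi(t_1,\theta_{-t_1}\omega)\,\varphi(t_2-t_1,\theta_{-t_2}\omega,\Psi_0)$, and once $t_2-t_1$ exceeds the absorption time $\varphi(t_2-t_1,\theta_{-t_2}\omega,\Psi_0)\in B(\theta_{-t_1}\omega)$, so the contraction bounds the distance to $\varphi(t_1,\theta_{-t_1}\omega,\Psi_0)$ by $\tilde\alpha^{1/2}e^{-\delta t_1}\bigl(\rho_B(\theta_{-t_1}\omega)+\|\Psi_0\|_{\mathbb{E}}\bigr)\to0$ because $\rho_B$ is tempered. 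Hence $u^*(\omega):=\lim_{t\to\infty}\varphi(t,\theta_{-t}\omega,\Psi_0)$ exists; it is independent of $\Psi_0$ (again by the contraction), measurable as a pointwise limit, and lies in $B(\omega)$, hence tempered. Continuity of $\varphi(s,\omega,\cdot)$, the cocycle identity and $\theta_{-t}\omega=\theta_{-(s+t)}(\theta_s\omega)$ give $\varphi(s,\omega)u^*(\omega)=\lim_t\varphi(s+t,\theta_{-(s+t)}(\theta_s\omega),\Psi_0)=u^*(\theta_s\omega)$, so $u^*$ is a random equilibrium (Definition \ref{equilibrium}), and it is unique since any other equilibrium $\tilde u^*$ obeys $\|u^*(\omega)-\tilde u^*(\omega)\|_{\mathbb{E}}\le\tilde\alpha^{1/2}e^{-\delta t}\|u^*(\theta_{-t}\omega)-\tilde u^*(\theta_{-t}\omega)\|_{\mathbb{E}}\to0$ by temperedness. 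Then $\mathcal A(\omega)=\{u^*(\omega)\}$ is measurable, compact, $\varphi$-invariant and in $\mathcal D$; for $D\in\mathcal D$, writing $\varphi(t,\theta_{-t}\omega,\cdot)=\varphi(\tfrac t2,\theta_{-t/2}\omega)\circ\varphi(\tfrac t2,\theta_{-t}\omega,\cdot)$, using $\varphi(\tfrac t2,\theta_{-t}\omega,D(\theta_{-t}\omega))\subset B(\theta_{-t/2}\omega)$ for $t$ large and contracting against $u^*(\omega)=\varphi(\tfrac t2,\theta_{-t/2}\omega)u^*(\theta_{-t/2}\omega)$ gives $H_d^*\bigl(\varphi(t,\theta_{-t}\omega,D(\theta_{-t}\omega)),\{u^*(\omega)\}\bigr)\le\tilde\alpha^{1/2}e^{-\delta t/2}\bigl(\rho_B(\theta_{-t/2}\omega)+\|u^*(\theta_{-t/2}\omega)\|_{\mathbb{E}}\bigr)\to0$, so $\mathcal A$ is the singleton sets random attractor of Definition \ref{attractors}. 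The main obstacle is Step 1: securing an absorbing ball with a \emph{tempered} radius --- this is exactly why the fractional OU transformation is introduced (it turns \eqref{4} into a pathwise random ODE on which Gronwall applies) and why Lemma \ref{property} is needed (to make the resulting improper integral finite and tempered); Steps 2 and 3 are then essentially automatic, the decisive point being that the fractional Brownian terms cancel in the equation for the difference of two solutions, so no stochastic calculus is required beyond pathwise estimates.
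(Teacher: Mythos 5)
Your proposal is correct and follows essentially the same route as the paper: a pathwise exponential contraction between any two solutions (using the cancellation of the fBm terms and the one-sided dissipativity \eqref{5}), a tempered pullback absorbing ball obtained by comparing with the stationary fractional Ornstein--Uhlenbeck solutions of \eqref{23} via Gronwall and Lemma \ref{property}, and the stationarity of those solutions to pass to the pullback limit. Your third step is in fact more explicit than the paper's (you construct the equilibrium $u^*(\omega)$ as a Cauchy pullback limit and verify invariance, uniqueness and attraction directly, where the paper merely asserts the existence of the stationary process $\tilde{\Phi}$), but the underlying argument is the same.
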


\begin{proof}

Let $\Psi(t)=(u(t), v(t)), ~\Phi(t)=(\tilde{u}(t), \tilde{v}(t)) $
be any two solutions of system \eqref{1}. Their sample paths are not
differentiable, but the difference satisfies pathwise for $t\geq 0$,
\begin{eqnarray*}
\Psi(t)-\Phi(t)=\Psi_0-\Phi_0+\int_0^t(L(\Psi(s)-\Phi(s))
+(F(\Psi(s))-F(\Phi(s)))ds,
\end{eqnarray*}
and again, since the integrand is pathwise continuous, the
fundamental theorem of calculus indicates that the left hand side is
pathwise differentiable and satisfies
\begin{eqnarray}\label{19}
&&\frac{d}{dt}(\Psi(t)-\Phi(t))=L(\Psi(t)-\Phi(t))+F(\Psi(t))
-F(\Phi(t)), \ t\geq 0.
\end{eqnarray}
Recall that $\alpha=\min\{\lambda, \sigma\}$, we obtain from
\eqref{19} that
\begin{eqnarray}
\begin{split}
\frac{d}{dt}\|\Psi(t)-\Phi(t)\|_{\mathbb{E}}^2&=
2( \Psi(t)-\Phi(t), L(\Psi(t)-\Phi(t)))_{\mathbb{E}}\nonumber\\
&\quad\quad+2(\Psi(t)-\Phi(t), F(\Psi(t))
-F(\Phi(t)))_{\mathbb{E}}\nonumber\\
&\leq -2\alpha\|\Psi(t)-\Phi(t)\|_{\mathbb{E}}^2.
\end{split}\label{20}
\end{eqnarray}
Thus pathwise we have
\begin{equation*}
\|\Psi(t)-\Phi(t)\|_{\mathbb{E}}^2\leq \|\Psi_0-\Phi_0
\|_{\mathbb{E}}^2e^{-2\alpha t}\rightarrow 0, \ \ \mbox{as}
 \ \ t\rightarrow\infty.
\end{equation*}
That is to say that all solutions converge pathwise forward to each
other in time.

Now, we want to know where the solution will converge to. Let
$\bar{\Phi}(t)=(\bar{u}(t), \bar{v}(t))$. We consider the difference
$\Psi(t)-\bar{\Phi}(t)$. Since their paths are continuous, the
difference is pathwise differentiable and satisfies the integral
equation for $t\geq 0$,
\begin{eqnarray*}
&&\Psi(t)-\bar{\Phi}(t)=\Psi_0-\bar{\Phi}_0
+\int_0^t(L(\Psi(s)-\bar{\Phi}(s))+(F(\Psi(s))-F(\bar{\Phi}(s)))ds,
\end{eqnarray*}
which is equivalent to the pathwise differential equation
\begin{eqnarray*}
\frac{d}{dt}(\Psi(t)-\bar{\Phi}(t))=L(\Psi(s)-\bar{\Phi}(s))
+(F(\Psi(s))-F(\bar{\Phi}(s)), \ \ t\geq 0.
\end{eqnarray*}
That is to consider the following system
\begin{eqnarray}\label{25}
\left\{
\begin{array}{l}
\frac{d}{dt}(u(t)-\bar{u}(t))=-\mathbb{A}u(t)-\lambda
(u(t)-\bar{u}(t))+f(u(t))-v(t),\\
\frac{d}{dt}(v(t)-\bar{v}(t))=\varrho u(t) -\sigma
(v(t)-\bar{v}(t)).
\end{array}
\right.
\end{eqnarray}
By taking the inner product in $\mathbb{E}$, we get
\begin{eqnarray}\label{26}
&&\frac{d}{dt}(\|u(t)-\bar{u}(t)\|^2+\frac{1}{\varrho}\|v(t)
-\bar{v}(t)\|^2)\nonumber\\
&=& 2(-\mathbb{A}u(t), u(t)-\bar{u}(t))+2(f(u), u(t)
-\bar{u}(t))\nonumber\\
&&\quad-2(v(t),  u(t)-\bar{u}(t))+2(u(t),  v(t)-\bar{v}(t))\nonumber\\
&&\quad\quad-2\lambda\|u(t)-\bar{u}(t)\|^2
-\frac{2\sigma}{\varrho}\|v(t)-\bar{v}(t)\|^2.
\end{eqnarray}
We know that
\begin{eqnarray*}
&&2(-\mathbb{A}u(t),
u(t)-\bar{u}(t))=2(-\mathbb{A}(u(t)-\bar{u}(t)), u(t)-\bar{u}(t))\\
&&~~~~~~~~~~~~~~~~~~~~~+2(\mathbb{A}\bar{u}(t), u(t)-\bar{u}(t))\\
&&\le
\frac{\lambda}{2}\|u(t)-\bar{u}(t)\|^2+\frac{32}{\lambda}\|\bar{u}(t)\|^2,
\end{eqnarray*}
\begin{eqnarray*}
&&2(f(u), u(t)-\bar{u}(t))=2(f(u)-f(\bar{u}), u(t)-\bar{u}(t))\\
&&~~~~~~~~~~~~~~+2(f(\bar{u}), u(t)-\bar{u}(t))\\
&&\le-\gamma\|u(t)-\bar{u}(t)\|^2+\frac{\lambda}{2}\|u(t)-\bar{u}(t)\|^2
+\frac{8}{\lambda}\|f(\bar{u})\|^2,
\end{eqnarray*}
\begin{eqnarray*}
&&-2(v(t),  u(t)-\bar{u}(t))+2(u(t),
v(t)-\bar{v}(t))\\
&\le& \gamma\|u(t)-\bar{u}(t)\|^2+\frac{4}{\gamma}
\|\bar{v}(t)\|^2+\frac{\sigma}{\varrho}\|v(t)-\bar{v}(t)\|^2
+\frac{4\varrho}{\sigma}\|\bar{u}(t)\|^2.
\end{eqnarray*}
Combine the three inequalities above with \eqref{26}, we have
\begin{eqnarray}\label{27}
&&\frac{d}{dt}(\|u(t)-\bar{u}(t)\|^2+\frac{1}{\varrho}\|v(t)
-\bar{v}(t)\|^2)\nonumber\\
&\le&-\lambda\|u(t)-\bar{u}(t)\|^2-\frac{\sigma}{\varrho}
\|v(t)-\bar{v}(t)\|^2\nonumber\\
&&\quad+c_4(\|\bar{u}(t)\|^2+\|\bar{v}(t)\|^2+\|f(\bar{u})\|^2),
\end{eqnarray}
where $c_4$ is a positive constant depends on $\lambda, \varrho$ and
$\sigma$. Then we obtain
\begin{eqnarray*}
\frac{d}{dt}\|\Psi(t)-\bar{\Phi}(t)\|_{\mathbb{E}}^2\le
-\alpha\|\Psi(t)-\bar{\Phi}(t)\|_{\mathbb{E}}^2
+c_4(\|\bar{u}(t)\|^2+\|\bar{v}(t)\|^2+\|f(\bar{u})\|^2),
\end{eqnarray*}
and hence
\begin{eqnarray}\label{29}
&&\|\Psi(t)-\bar{\Phi}(t)\|_{\mathbb{E}}^2\le
\|\Psi_0(\omega)-\bar{\Phi}_0(\omega)\|^2e^{-\alpha t}\nonumber\\
&&~~~~~~~~~~~+c_4e^{-\alpha t}\int_0^te^{\alpha s}
(\|\bar{u}(s)\|^2+\|\bar{v}(s)\|^2+\|f(\bar{u}(s))\|^2)ds.
\end{eqnarray}
Let us check that the family of balls centered on
$\bar{\Phi}_0(\omega)$ with the random radius
\begin{equation}\label{33}
R(\omega):=\sqrt{1+c_4\int_{-\infty}^0e^{\alpha  s}
(\|\bar{u}(s)(\omega)\|^2+\|\bar{v}(s)(\omega)\|^2
+\|f(\bar{u}(s)(\omega))\|^2)ds}
\end{equation}
is a pullback absorbing family for the random dynamical system
generated by system \eqref{1}.

Due to the assumptions on $f$ and Lemma \ref{property}, the radius
defined in \eqref{33} is well defined. Now, by replacing $\omega$ by
$\theta_{-t}\omega$ in \eqref{29}, we get
\begin{eqnarray}\label{34}
&&\|\Psi(\theta_{-t}\omega)-\bar{\Phi}(\theta_{-t}\omega)
\|_{\mathbb{E}}^2\nonumber\\
&\leq& \|\Psi_0(\theta_{-t}\omega)-\bar{\Phi}_0
(\theta_{-t}\omega)\|_{\mathbb{E}}^2e^{-\alpha  t}\nonumber\\
&&+c_4\int_0^te^{\alpha  (s-t)}(\|\bar{u}(s)
(\theta_{-t}\omega)\|^2+\|\bar{v}(s) (\theta_{-t}\omega)\|^2
+\|f(\bar{u}(s)(\theta_{-t}\omega))\|^2)ds\nonumber\\
&=& \|\Psi_0(\theta_{-t}\omega)-\bar{\Phi}_0
(\theta_{-t}\omega)\|_{\mathbb{E}}^2e^{-\alpha  t}\nonumber\\
&&+c_4\int_{-t}^0e^{\alpha  s}(\|\bar{u}(s)(\omega)
\|^2+\|\bar{v}(s)(\omega)\|^2 +\|f(\bar{u}(s)(\omega))\|^2)ds.
\end{eqnarray}
The last term in \eqref{34} due to $\bar{u}(s)(\theta_{-t}\omega)=
\bar{u}_0(\theta_{s-t}\omega)=\bar{u}(s-t)(\omega)$ and
$\bar{v}(s)(\theta_{-t}\omega)=
\bar{v}_0(\theta_{s-t}\omega)=\bar{v}(s-t)(\omega)$ which deduced
from that $(\bar{u}(t))_{t\in \mathbb{R}}$ and $(\bar{v}(t))_{t\in
\mathbb{R}}$ are stationary processes. The conclusion now follows as
$t\rightarrow\infty$.

Because of the stationarity and Lemma \ref{property}, we have
$e^{-\alpha
t}\|\bar{\Phi}_0(\theta_{-t}\omega)\|_{\mathbb{E}}^2=e^{-\alpha
t}\|\bar{\Phi}(-t)(\omega)\|_{\mathbb{E}}^2 \rightarrow 0$ as
$t\rightarrow\infty$. Then we have the pullback absorption
\begin{equation}\label{35}
\|\Psi(\theta_{-t}\omega)\|_{\mathbb{E}}^2\leq
\|\bar{\Phi}_0(\omega)\|_{\mathbb{E}}^2+R^2(\omega), \ \ \forall
t\geq T_{\mathcal{D}(\omega)}.
\end{equation}
So, we have the stationary random process
$\tilde{\Phi}(t)(\omega):=\tilde{\Phi}_0(\theta_t\omega)$, which
pathwise attracts all other solutions in both forward and pullback
senses, is a random equilibrium. Now, we define a singleton sets
$\mathcal{A}=\{A(\omega), \omega\in
\Omega\}=\{\tilde{\Phi}_0(\omega)\}$, i.e. the singleton sets is
formed by the random equilibrium. Here, we remain to show that the
singleton sets turns out to be a random attractor. According to
Definition \ref{attractors}, we can easily get the compactness,
invariance and attraction (implied by absorbtion). The proof is
complete.

\end{proof}

\section{Conclusions}
We studied the stochastic FitzHugh-Nagumo equations driven by
fractional Brownian motion. The existence of the random attractor
formed by the unique random equilibrium turns out to be a single
sets random attractor, which differs from the results obtained in
\cite{Huang} and \cite{WLW} where the same system is driven by white
noises. The methodology can be used to deal with other stochastic
lattice systems, which is a topic that will be the focus of further
research.

\vskip 1cm

\section*{Acknowledgements}
The authors would like to express their sincere thanks to the
anonymous referees for their time and helpful comments and
suggestions, which have largely improved the presentation of this
paper.

\vskip 0.5cm

\end{document}